\documentclass{amsart}
\usepackage[T2A]{fontenc}
\usepackage[english]{babel}
\usepackage{amsmath}
\usepackage{amssymb}

\numberwithin{equation}{section}
\newtheorem{theorem}{Theorem}
\newtheorem{lem}{Lemma}
\newtheorem{pr}{Proposition}
\newtheorem{cory}{Corollary}
\theoremstyle{definition}
\newtheorem{df}{Definition}
\newtheorem{rem}{Remark}

\newcommand{\de}{\delta}
\newcommand{\er}{\varepsilon}

\newcommand{\si}{\sigma}
\newcommand{\sid}{\sigma_d}
\newcommand{\meal}{m}
\newcommand{\za}{\zeta}

\newcommand{\hol}{\mathcal{H}ol}

\newcommand{\Dbb}{\mathbb D}
\newcommand{\DDb}{\overline{\mathbb D}}
\newcommand{\Tbb}{\partial \mathbb{D}}

\newcommand{\sph}{\partial B}

\newcommand{\bd}{B_d}
\newcommand{\spd}{\partial B_d}
\newcommand{\ccd}{\mathbb{C}^d}
\newcommand{\bbd}{\overline{B}_d}
\newcommand{\MM}{M_+}
\newcommand{\qball}{Q}
\newcommand{\hp}{{H^p}}

\newcommand{\deker}{\mathcal{K}}
\newcommand{\bdd}{b}
\newcommand{\dbr}{\mathcal{H}}

\begin{document}

\title[Reverse Carleson measures]{Reverse Carleson measures for Hardy spaces
in the unit ball}
\author{Evgueni Doubtsov}
\address{Department of Mathematics and Computer Science,
St.~Petersburg State University,
Line 14th (Vasilyevsky Island), 29, St.~Petersburg 199178,
Russia
}
\email{dubtsov@pdmi.ras.ru}

\thanks{This research was supported by the Russian Science Foundation (grant No.~19-11-00058),
https://rscf.ru/project/19-11-00058/}

\thanks{\textit{Key words}: Hardy spaces, reverse Carleson measures, de Branges--Rovnyak spaces}

\begin{abstract}
Let $H^p=H^p(B_d)$ denote the Hardy space in the open unit ball
$B_d$ of $\mathbb{C}^d$, $d\ge 1$.
We characterize the reverse Carleson measures for $H^p$, $1<p<\infty$,
that is, we describe all finite positive Borel measures $\mu$,
defined on the closed ball $\overline{B}_d$, such that
\[
\|f \|_\hp \le c \|f\|_{L^p(\overline{B}_d,\mu)}
\]
for all $f\in H^p(B_d) \cap C(\overline{B}_d)$ and a universal constant $c>0$.
Given a non-inner holomorphic function $b: B_d \to B_1$, we obtain
properties of the reverse Carleson measures
for the de Branges--Rovnyak space $\mathcal{H}(b)$.
\end{abstract}

\maketitle

\section{Introduction}\label{s_int}
Let $\bd$ denote the open unit ball of $\ccd$, $d\ge 1$,
and let $\si=\sid$ denote the normalized Lebesgue measure on the unit sphere $\spd$.
For the unit disk $B_1$, the measure $\si_1$ and the unit circle $\sph_1$, we also use the
symbols $\Dbb$, $\meal$ and $\Tbb$, respectively.

Let $\hol(\bd)$ denote the space of holomorphic functions in the ball $\bd$.
For $0<p<\infty$, the Hardy space $H^p=H^p(\bd)$
consists of functions $f\in \hol(\bd)$ such that
\[
\|f\|_{H^p}^p = \sup_{0<r<1} \int_{\spd} |f(r\zeta)|^p\, d\si(\zeta) < \infty.
\]
As usual, the Hardy space $H^p(\bd)$, $p>0$,
is identified with the space
$H^p(\spd)$, which consists of the corresponding boundary values.
Further details about the spaces $\hp(\bd)$ and related objects are contained,
for example, in monographs \cite{Ru80} and \cite{Z05}.

\subsection{Carleson measures}
Let $\MM(\bd)$ denote the space of all finite
positive Borel measures defined on $\bd$.
Firstly, assume that $d=1$ and
$\mu\in \MM(\Dbb)$.
The classical Carleson Theorem \cite{Ca58} states that
$H^p(\Dbb)$ is embedded into $L^p(\Dbb, \mu)$, that is, there exists a constant $C > 0$ such that
\begin{equation}\label{e_Carl_1}
\| f \|_{L^p(\Dbb, \mu)} \le C \|f\|_\hp \quad\textrm{for all}\ f \in\hp
\end{equation}
if and only if $\mu$ satisfies the following Carleson condition:
there exists a constant $C > 0$ such that
\begin{equation}\label{e_Carl_geom_1}
\mu(S_I) \le C \meal(I)\quad\textrm{for all arcs}\ I\subset\Tbb,
\end{equation}
where
\[
S_I = \left\{z \in\overline{\Dbb}:\ 1-\meal(I) \le |z| \le 1,\ \frac{z}{|z|} \in  I \right\}.
\]

For an arbitrary dimension $d\ge 1$, consider the sets
\[
Q = Q(\za, \delta) =\{\xi\in \spd: |1-\langle \za, \xi\rangle | \le \delta\},
\quad \za\in\spd,\ \delta>0.
\]
Observe that $Q$ is a closed ball with respect to the nonisotropic metric
\[
\rho(\za, \xi) = |1-\langle \za, \xi\rangle |^{\frac{1}{2}}, \quad \za, \xi\in\spd.
\]
For $r>0$ and $Q = Q(\za, \delta)$, put $rQ = Q(\za, r\delta)$ by definition.

By the theorem on Carleson measures for the Hardy space in the unit ball
(see, for example, \cite[Ch.~5]{Z05}),
the estimate
\begin{equation}\label{e_Carl_n}
\| f \|_{L^p(\Dbb, \mu)} \le C \|f\|_\hp,\quad f \in\hp,
\end{equation}
is equivalent to the following condition on the measure $\mu$:
\begin{equation}\label{e_Carl_geom_n}
\mu(S_Q) \le C\si(Q)\quad\textrm{for all nonisotropic balls}\ Q\subset\spd,
\end{equation}
where
\[
S_Q = \left\{z \in\bd:\ 1-\si(Q) \le |z| \le 1,\ \frac{z}{|z|} \in  Q \right\}
\]
is a standard Carleson window.

\subsection{Reverse Carleson measures}
Remark that Luecking \cite{Lu85} studied
Carleson measures for Bergman spaces in the unit disk
together with those measures for which the corresponding inverse inequality holds.
In the case of several complex variables,
we mention the recent related results by Green and Wagner \cite{GW23}
about the dominating sets for Bergman spaces on general domains in $\ccd$.

In the first part of the present paper, we investigate the reverse Carleson inequality
\begin{equation}\label{e_revrese}
\|f \|_\hp \le C \|f\|_{L^p(\bbd,\mu)}, \quad f\in  C(\bbd) \cap H^p(\bd),\ 1 < p < \infty,
\end{equation}
for $\mu\in\MM(\bbd)$.

Remark that the above problem is formulated
under assumption that the measure $\mu$ is defined on the closed ball $\bbd$.
In fact, for $\mu\in\MM(\bd)$ this problem degenerates and has no solutions;
see Theorem~\ref{t_Carl_Hp} stated below.

Observe that the theorem about the Carleson measures for $\hp(\bd)$
also has a formulation under the assumption $\mu\in \MM(\bbd)$.
Indeed, in this case, condition~\eqref{e_Carl_n} makes sense for the functions $f$ in the set
$C(\bbd) \cap H^p(\bd)$, which is dense in $H^p(\bd)$.
Condition \eqref{e_Carl_geom_n} implies that the restriction $\mu_{|\spd}$
is absolutely continuous with respect to Lebesgue measure $\si$,
and the Radon--Nikodym derivative ${d\mu_{|\spd}}/{d\si}$ is bounded.

For $d=1$ and a measure $\mu\in\MM(\DDb)$ with property \eqref{e_Carl_geom_1},
Lef\`evre, Li et al.~\cite{LLQR12} proved that
property~\eqref{e_revrese} is equivalent to the following condition:
\[
\mu(S_I ) \ge C\meal(I)\quad\textrm{for all arcs}\ I \subset \Tbb.
\]
Further, this result was proved for all $\mu\in\MM(\DDb)$ in \cite{HMNO14},
where other equivalent properties have been also obtained.

The main result of the present paper about the reverse Carleson measures
for the Hardy space $H^p(\bd)$ with $d\ge 1$
is Theorem~\ref{t_Carl_Hp} formulated in the next section.

\subsection{Reverse Carleson measures for Hardy spaces in the unit ball}
Let $1 < p < \infty$. For $w \in \bd$, consider the Cauchy kernels
\[
k_w(z) = \frac{1}{(1-\langle z, w\rangle)^d},\quad z\in\bd,
\]
and the corresponding normalized functions
\[
K_w = \frac{k_w}{\|k_w\|_{\hp}}.
\]
Proposition~1.4.10 from \cite{Ru80} guarantees that
\[
\|k_w\|_p 	= (1-|w|^2)^{-\frac{d}{q}}, \quad\textrm{where}\ \frac{1}{p} + \frac{1}{q} = 1.
\]

For $d=1$, the following result was obtained in \cite{HMNO14}.

\begin{theorem}\label{t_Carl_Hp}
Let $1 < p < \infty$ and $\mu \in \MM(\bbd)$, $d\ge 1$.
Then the following properties are equivalent.
\begin{enumerate}
  \item[(i)] There exists a constant $C > 0$ such that
\[
\int_{\bbd}
|f|^p\,d\mu \ge C \|f\|_\hp^p\quad\textrm{for all}\ f \in C(\bbd) \cap H^p.
\]
  \item[(ii)] There exists a constant $C > 0$ such that
\[
\int_{\bbd}
|K_w|^p\,d\mu \ge C \quad\textrm{for all}\ w\in\bd.
\]
  \item[(iii)] There exists a constant $C > 0$ such that
\[
\mu(Q) \ge C \si(\qball)\quad\textrm{for all nonisotropic balls}\ Q \subset \partial\bd.
\]
\end{enumerate}
\end{theorem}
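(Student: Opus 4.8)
The plan is to run the cyclic chain (i) $\Rightarrow$ (ii) $\Rightarrow$ (iii) $\Rightarrow$ (i). The first implication is immediate: for $w\in\bd$ and $z\in\bbd$ we have $|\langle z,w\rangle|\le|w|<1$, so $k_w\in C(\bbd)\cap\hp$, hence $K_w\in C(\bbd)\cap\hp$ with $\|K_w\|_\hp=1$, and applying (i) to $f=K_w$ yields (ii).

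For (iii) $\Rightarrow$ (i) I would use that every nonisotropic ball $Q$ lies in $\spd$, so (iii) constrains only the restriction $\mu_{|\spd}$, while the interior part $\mu_{|\bd}$ merely adds a nonnegative term to $\int_{\bbd}|f|^p\,d\mu$. Since $(\spd,\rho,\si)$ is a space of homogeneous type, the Lebesgue differentiation theorem holds for the balls $Q(\za,\de)$; letting $\de\to0$ in (iii) forces $h:=d\mu_{|\spd}/d\si\ge C$ $\si$-a.e. As the boundary values of an $f\in C(\bbd)\cap\hp$ coincide with $f_{|\spd}$, we get $\int_{\bbd}|f|^p\,d\mu\ge\int_{\spd}|f|^p h\,d\si\ge C\int_{\spd}|f|^p\,d\si=C\|f\|_\hp^p$, i.e.\ (i).

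The heart of the matter is (ii) $\Rightarrow$ (iii). Write $\mu=h\,d\si+\mu_s+\mu_b$, where $h\ge0$ is the density of the absolutely continuous part of $\mu_{|\spd}$, $\mu_s\perp\si$ is its singular part, and $\mu_b=\mu_{|\bd}$. The first observation is that, with $w=(1-\de)\za$ and $\de\to0$, the unit-mass measures $|K_w|^p\,d\si$ form an approximate identity concentrating at $\za$: one has $\int_{\spd}|K_w|^p\,d\si=1$, the exponents in $|K_w|^p=(1-|w|^2)^{dp/q}|1-\langle\cdot,w\rangle|^{-dp}$ are exactly those dictated by the homogeneity $\si(Q(\za,\de))\asymp\de^d$, and $|K_w|^p\to0$ uniformly on $\{\xi:\rho(\za,\xi)\ge\eta\}$ for each fixed $\eta$; hence $\int_{\spd}|K_w|^p h\,d\si\to h(\za)$ at every Lebesgue point of $h$. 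One then checks that the two remaining terms vanish in the limit for $\si$-a.e.\ $\za$. For $\mu_s$ this follows from $\mu_s(Q(\za,\de))/\si(Q(\za,\de))\to0$ $\si$-a.e.\ combined with the dyadic estimate $|K_{(1-\de)\za}(\xi)|^p\lesssim 2^{-jdp}\de^{-d}$ for $\xi\in Q(\za,2^j\de)\setminus Q(\za,2^{j-1}\de)$. For $\mu_b$ one exploits that $\mu_b$ is finite on the \emph{open} ball: by Tonelli and the Forelli--Rudin estimates,
\[
\int_{\spd}\Bigl(\int_{\bd}|K_{r\za}(z)|^p\,d\mu_b(z)\Bigr)d\si(\za)\asymp\int_{\bd}\Bigl(\tfrac{1-r^2}{1-r^2|z|^2}\Bigr)^{d(p-1)}d\mu_b(z)\to0
\]
as $r\to1$ by dominated convergence, so that along a suitable sequence $\de_n\to0$ one gets $\int_{\bd}|K_{(1-\de_n)\za}|^p\,d\mu_b\to0$ for $\si$-a.e.\ $\za$ (the same dyadic tail estimate upgrades this into a statement valid along the whole sequence). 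Combining the three facts, for $\si$-a.e.\ $\za$ we obtain $\lim_n\int_{\bbd}|K_{(1-\de_n)\za}|^p\,d\mu=h(\za)$, which by (ii) is $\ge C$; thus $h\ge C$ $\si$-a.e., and then $\mu(Q)\ge\mu_{|\spd}(Q)\ge\int_Q h\,d\si\ge C\si(Q)$ for every nonisotropic ball, which is (iii).

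The main obstacle is precisely this last step: showing that neither the interior part $\mu_b$ nor the singular part $\mu_s$ of $\mu$ can substitute for genuine boundary mass when one tests (ii) on Cauchy kernels. This is a Fatou-type ``washing-out'' phenomenon, and it is here that the standing assumption $\mu\in\MM(\bbd)$ — in particular, that $\mu_{|\bd}$ carries no mass on $\spd$ and is finite on the open ball — enters in an essential way. Everything else (the $\hp$-norm computations via Rudin's Proposition~1.4.10, and the approximate-identity and differentiation arguments on $\spd$) is standard harmonic analysis on spaces of homogeneous type.
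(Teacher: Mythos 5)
Your cyclic chain (i)$\Rightarrow$(ii)$\Rightarrow$(iii)$\Rightarrow$(i) is correct, and the first and last implications coincide with the paper's ((iii)$\Rightarrow$(i) is the same differentiation argument giving $d\mu_{|\spd}/d\si\ge C$ a.e.). The core implication (ii)$\Rightarrow$(iii), however, is done by a genuinely different route. The paper never decomposes $\mu$: it integrates the testing inequality (ii) in the variable $w$ over the Carleson window $S_{Q,h}$ against volume measure, divides by $h$, and invokes Lemma~\ref{l_balay} (the averaged kernel $\Phi_h$ is uniformly bounded on $\bbd$ and tends to $0$ off $Q$) to conclude directly that $\mu(Q)\ge C\si(Q)$ for every nonisotropic ball; the pointwise lower bound on the density appears only later, inside (iii)$\Rightarrow$(i). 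You instead take radial limits of the testing condition at a fixed boundary point: you split $\mu=h\,d\si+\mu_s+\mu_b$, show that $|K_{(1-\de)\za}|^p\,d\si$ is an approximate identity so the absolutely continuous part produces $h(\za)$ at Lebesgue points, kill $\mu_s$ via the differentiation theorem for singular measures plus the dyadic shell bound, and kill $\mu_b$ via Forelli--Rudin estimates, dominated convergence and an a.e.\ subsequence extraction, arriving at the (equivalent) pointwise statement $h\ge C$ a.e., whence (iii). Both arguments are sound; yours is closer in spirit to the one-dimensional treatment in \cite{HMNO14} and yields the density bound directly, while the paper's balayage of the testing condition is shorter and sidesteps the Lebesgue-point analysis, the Lebesgue decomposition and the subsequence step. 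One small remark: your parenthetical claim that the dyadic tail estimate upgrades the interior-part convergence from a subsequence to the whole family is not really substantiated (the shell estimates control boundary mass, not $\mu_b$), but it is also unnecessary, since convergence along a single sequence $\de_n\to 0$ for $\si$-a.e.\ $\za$ already gives $h\ge C$ a.e.
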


It is mentioned above that the problem on inverse Carleson measures for $\hp(\bd)$, $1<p<\infty$,
has no solutions for the measures $\mu$ defined on the open unit ball $\bd$.
Indeed, this fact follows immediately from property~(iii) in Theorem~\ref{t_Carl_Hp}.
Further comments and the proof of Theorem~\ref{t_Carl_Hp}
are given in Section~\ref{s_hp}.

\subsection{Reverse Carleson measures for de Branges--Rovnyak spa\-ces}
The second type of spaces studied in this paper is that of
de Branges--Rovnyak spaces in the unit ball.

Let $\bdd: \bd\to \Dbb$ be a holomorphic function.
Direct inspection shows that the function
\[
\deker^b(z, w) = \frac{1-b(z)\overline{b(w)}}{(1-\langle z, w\rangle)^d},\quad z,w\in \bd,
\]
has the properties of a reproducing kernel.
The reproducing kernel Hilbert space $\dbr(b)\subset H^2$
with kernel $\deker^b(z, w)$
is called a de Branges--Rovnyak space.
Further details about these spaces
in the unit disk are given, for example, in monograph \cite{Sa94}.

On the one hand, if $\|b\|_\infty < 1$, then the space $\dbr(b)$ coincides with $H^2$,
up to an equivalent norm.
On the other hand, the space $\dbr(b)$ has an explicit form for an inner function $b$.
Recall that a holomorphic function
$I:\bd \to \Dbb$ is called \textit{inner} if
$|I(\za)|=1$ for $\si_d$-almost all $\za\in\spd$.
Here and in what follows,
$I(\za)$ denotes the limit
$\lim_{r\to 1-} I(r\za)$, as usual.
It is known that the corresponding limit exists $\si_d$-almost everywhere.
Direct computations of the reproducing kernel for the space
$H^2 \ominus I H^2$ show that
$\dbr(b) = H^2 \ominus I H^2$.
If $I$ is an inner function in the unit disk $\Dbb$,
then $H^2(\Dbb) \ominus I H^2(\Dbb)$ is a classical model space.

A number of results on the reverse Carleson measures for the spaces $\dbr(b)$ in the unit disk
are obtained in \cite{BFGHR14}.
In that paper, for certain functions $b$, descriptions of the measures under consideration are obtained and
it is shown, in particular, that it suffices to check the reverse Carleson inequality
on the corresponding reproducing kernels.
For arbitrary spaces $\dbr(b)$
in the unit disk, it is known that such a reproducing kernel thesis does not hold
(see \cite{HMNO14}).

In the final Section~\ref{s_Hb} of the present paper, an analog
of property~(ii) from Theorem~\ref{t_Carl_Hp}
is used to deduce properties of the reverse Carleson
measures for the de Branges--Rovnyak spaces $\dbr(b)$
in the unit ball.
The results in this direction are obtained
for non-inner functions $b$
and are contained in Theorem~\ref{t_t2} and Corollary~\ref{c_noninner},
see Section~\ref{s_Hb}.

\subsection*{Notation}
As usual, $C>0$ denotes a constant whose value can change from line to line.

\section{Reverse Carleson measures for Hardy spaces}\label{s_hp}
In this section, Theorem~\ref{t_Carl_Hp} is proved
and one more description of the measures under investigation
is given in terms of the standard Carleson windows $S_Q$,
see Proposition~\ref{p_p4} below.

\subsection{Auxiliary results}
For $0<h\le 1$ and a nonisotropic ball $Q$, define the Carleson $h$-window $S_{Q,h}$ by the following equality:
\begin{equation}\label{e_hwind}
 S_{Q,h} = \left\{z \in\bd:\ 1 - h \le |z| \le 1,\ \frac{z}{|z|} \in Q \right\}.
\end{equation}
Let $\nu = \nu_d$ denote Lebesgue measures on the complex Euclidean space $\ccd$.

\begin{lem}\label{l_balay}
Let $\qball\subset \spd$
be a nonisotropic ball. For $0<h\le 1$, put
\begin{equation}\label{e_PhiDef}
\Phi_h(z)
= \frac{1}{h}
\int_{S_{\qball,h}} \frac{(1-|w|^2)^{pd-d}}{|1-\langle z, w \rangle|^{pd}}\,
d\nu(w), \quad z\in\bbd.
\end{equation}
Then
\begin{enumerate}
  \item[(a)] $\Phi_h(z) \le C$ for all $z\in \bbd$ and $0<h\le 1;$
  \item[(b)] $\lim_{h\to 0 }\Phi_h(z)= 0$ for all $z\in \bbd\setminus Q$.
  \end{enumerate}
\end{lem}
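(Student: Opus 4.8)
The plan is to estimate the integral $\Phi_h(z)$ by exploiting the standard size estimates for the kernel $|1-\langle z,w\rangle|^{-pd}$ on the ball together with the geometry of the Carleson $h$-window $S_{Q,h}$. First I would record the basic volume estimate: $\nu(S_{Q,h}) \approx h \cdot \si(Q) \approx h\cdot \de^d$ when $Q = Q(\za,\de)$ and $h$ is comparable to or larger than $\de^2$ (and $\nu(S_{Q,h})\approx h^{1+d}$ when $h\le \de^2$, but this regime only helps). The key classical fact I would invoke (cf.~\cite[Ch.~1]{Ru80} or \cite[Ch.~1]{Z05}) is that for $c$ real,
\[
\int_{\spd} \frac{d\si(\xi)}{|1-\langle z,\xi\rangle|^{d+c}} \asymp (1-|z|^2)^{-c}, \quad c>0,
\]
together with its volume analog. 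To handle the weight $(1-|w|^2)^{pd-d}$ I would slice $S_{Q,h}$ by the level sets $|w| = 1-t$, $0<t\le h$, writing $d\nu(w) \approx t^{?}\,dt\, d\si(\xi)$ along the slice through $\xi = w/|w|\in Q$; more precisely on each sphere of radius $1-t$ the relevant measure is essentially $d\si$ restricted to $Q$ times $dt$, with an extra factor from the fact that the window only sees directions in $Q$.

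For part (a): I would bound $|1-\langle z,w\rangle| \gtrsim |1-\langle z,\xi\rangle| - t$ roughly, or more robustly use that $\int_{Q} |1-\langle z,\xi\rangle|^{-(pd - a)}\,d\si(\xi)$ is controlled uniformly — but cleanly the fastest route is: drop the restriction $w/|w|\in Q$, extend the $w$-integral to the whole spherical shell $1-h\le |w|\le 1$, integrate $\xi\in\spd$ first using the displayed asymptotic with $c = (p-1)d > 0$ to get $\int_{\spd}|1-\langle z,(1-t)\xi\rangle|^{-pd}\,d\si(\xi) \lesssim t^{-(p-1)d - d + d}$... one must be careful: the exponent is $pd = d + (p-1)d$, so the integral over the sphere of the kernel at the point $(1-t)\xi$ is $\asymp (1-(1-t)^2)^{-(p-1)d}\asymp t^{-(p-1)d}$. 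Then
\[
\Phi_h(z) \lesssim \frac1h \int_0^h t^{(p-1)d}\cdot t^{-(p-1)d}\, dt = \frac1h\int_0^h dt = 1,
\]
where $t^{(p-1)d} = (1-|w|^2)^{(p-1)d} = (1-|w|^2)^{pd-d}$ is exactly the weight. This gives (a) with a universal $C$ depending only on $p,d$; note the restriction $w/|w|\in Q$ was only thrown away, which is legitimate for an upper bound.

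For part (b): fix $z\in\bbd\setminus Q$. If $|z|<1$ then $\Phi_h(z)\to 0$ trivially since the integrand is bounded on a fixed compact set and $\nu(S_{Q,h})\to 0$; the content is the boundary case $z\in\spd\setminus Q$. Here $\rho(z,\za) = |1-\langle z,\za\rangle|^{1/2} > \sqrt{\de}$, so there is $\eta>0$ with $|1-\langle z,\xi\rangle|\ge 2\eta$ for all $\xi\in Q$. For $w\in S_{Q,h}$ with $w = (1-t)\xi$, $\xi\in Q$, and $t\le h$ small, $|1-\langle z,w\rangle| \ge |1-\langle z,\xi\rangle| - t|\langle z,\xi\rangle| \ge 2\eta - h \ge \eta$ once $h\le\eta$. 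Hence on $S_{Q,h}$ the integrand is bounded by $\eta^{-pd}(1-|w|^2)^{pd-d} \le \eta^{-pd}(2h)^{pd-d}$ (using $pd - d = (p-1)d>0$), so
\[
\Phi_h(z) \le \frac1h\,\eta^{-pd}(2h)^{(p-1)d}\,\nu(S_{Q,h}) \lesssim \frac1h\, h^{(p-1)d}\cdot h\,\si(Q) = h^{(p-1)d}\,\si(Q)\xrightarrow[h\to0]{}0,
\]
since $(p-1)d>0$. The main obstacle is purely bookkeeping: getting the slicing of $\nu$ over $S_{Q,h}$ and the exponent arithmetic $pd = d + (p-1)d$ exactly right so that the weight cancels the sphere-integral singularity in part (a); part (b) is then essentially immediate from the separation $\rho(z,Q)>0$.
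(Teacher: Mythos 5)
Your proposal is correct and follows essentially the same route as the paper: for (a) you discard the restriction $w/|w|\in Q$, slice the shell $1-h\le |w|\le 1$, and apply the kernel estimate of \cite[Proposition~1.4.10]{Ru80} so that the weight $(1-|w|^2)^{pd-d}$ cancels the resulting singularity, and for (b) you use the separation $|1-\langle z,w\rangle|\ge \eta>0$ on $S_{Q,h}$ together with $\nu(S_{Q,h})\le C h\,\si(Q)$ and the decaying factor $h^{(p-1)d}$. One small remark: your claim that the interior case $|z|<1$ is ``trivial because $\nu(S_{Q,h})\to 0$'' overlooks the factor $1/h$ (indeed $\nu(S_{Q,h})/h \approx \si(Q)$ does not vanish); but your boundary-case estimate, which also uses $(1-|w|^2)^{pd-d}\le (2h)^{(p-1)d}$, applies verbatim to interior points with $\eta = 1-|z|$, which is exactly how the paper treats all $z\in\bbd\setminus Q$ at once.
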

\begin{proof}
(a) It is clear that
\[
\Phi_h(z)
\le \frac{1}{h}
\int_{1-h\le |w|\le 1} \frac{(1-|w|^2)^{pd-d}}{|1-\langle z, w \rangle|^{pd}}\,
d\nu(w), \quad z\in\bd.
\]
Next, integrating in polar coordinates, we have
\[
\begin{split}
\Phi_h(z)
&\le C(d) \frac{1}{h}
\int_{1-h}^1 (1-r^2)^{pd-d}
\int_{\spd}
\frac{d\si(\xi)}{|1- r\langle z, \xi \rangle|^{pd}}\, dr
\\
&\le C(d) \frac{1}{h}
\int_{1-h}^1 (1-r^2)^{pd-d}
(1-r|z|)^{d-pd}\, dr
\\
&\le C(d)
\frac{1}{h}
\int_{1-h}^1\, dr
 \le C(d)
\end{split}
\]
by \cite[Proposition~1.4.10]{Ru80}.

(b) Assume that ${z} \notin  \qball$. Then there exists a $\de=\de(z)>0$ such that
$|1-\langle z, w \rangle|\ge\de$
for all $w\in S_{\qball,h}$,
$0 < h \le 1$. Hence,
\[
\begin{split}
   0 \le  \Phi_h(z)
&\le \frac{1}{h}
\int_{S_{\qball,h}}
\frac{(1-|w|^2)^{pd-d}}{\delta^{pd}}
d\nu(w)
\\
&\le C \frac{1}{h}
\frac{\si(\qball) h^{pd-d+1}}{\delta^{pd}}
\to 0\quad\textrm{as}\ h\to 0+.
\end{split}
\]
The proof of the lemma is finished.
\end{proof}

\subsection{Proof of Theorem~\ref{t_Carl_Hp}}
$(\mathrm{i})\Rightarrow (\mathrm{ii})$
This implication is trivial, since $\|K_w\|_\hp =1$ for $w\in\bd$.

$(\mathrm{ii})\Rightarrow (\mathrm{iii})$
Fix a nonisotropic ball $\qball\subset \spd$.
Integrating the estimate from condition~(ii) over the set
\[
 \widetilde{S}_{Q,h} = \left\{z \in\bd:\ 1 - h \le |z| < 1,\ \frac{z}{|z|} \in Q \right\},
\quad 0<h\le 1,
 \]
with respect to the measure $\nu$, we have
\[
\begin{aligned}
  C_{\mathrm{ii}}|\qball| h
&\le
\int_{\widetilde{S}_{\qball,h}}
\int_{\bbd} |K_w|^p\, d\mu\, d\nu(w) \\
&= \int_{\bbd}
\int_{\widetilde{S}_{\qball,h}}
\frac{(1-|w|^2)^{\frac{pd}{q}}}{|1-\langle z, w \rangle|^{pd}} \,d\nu(w)\, d\mu(z).
\end{aligned}
\]
The last expression does not change after replacing the set
${\widetilde{S}_{\qball,h}}$ by
 ${S}_{\qball,h}$ in the inner integral.
 Therefore, the above estimate has the following form in terms
 of the function $\Phi_h$ defined by identity~\eqref{e_PhiDef}:
\begin{equation}\label{e_bdd_Ph_ge}
\int_{\bbd}
\Phi_h(z)\, d\mu(z) \ge C_{\mathrm{ii}}\si(\qball).
\end{equation}
On the one hand, properties (a) and (b) from Lemma~\ref{l_balay} guarantee that
\begin{equation}\label{e_Ph_lim0}
 \lim_{h\to 0+}\int_{\bbd\setminus Q} \Phi_h(z)\, d\mu(z) =0.
\end{equation}
On the other hand, by property~(b) from Lemma~\ref{l_balay}, we have
\begin{equation}\label{e_Q_Ph_le}
 \int_{Q} \Phi_h(z)\, d\mu(z) \le C\mu(Q)\quad \textrm{for all}\ h>0.
\end{equation}
Combining properties \eqref{e_bdd_Ph_ge}--\eqref{e_Q_Ph_le}, we obtain
\[
C\mu(\qball ) \ge C_{\mathrm{ii}}\si(\qball),
\]
as required.

$(\mathrm{iii})\Rightarrow (\mathrm{i})$
Let
\[
\mu_{|\spd} = g \si + \mu^s_{|\spd},
\]
be the Lebesgue decomposition, where
$\mu^s_{|\spd} \bot \si$
and $g\in L^1(\si)$ denotes the corresponding Radon--Nikodym derivative.
Theorems 5.3.1 and 5.3.2 from monograph \cite{Ru80} guarantee that
\[
g(\za) = \lim_{\delta\to 0+} \frac{\mu_{|\spd}(Q(\za, \delta))}{\si(Q(\za, \delta))}
\]
for $\si$-almost all $\za\in\spd$.
Thus, condition~(iii) implies that $g \ge C>0$ $\si$-almost everywhere.
Therefore,
\[
\int_{\bbd}
|f|^p\, d\mu \ge
\int_{\spd}
|f|^p\,d\mu \ge
C \int_{\spd} |f|^p\,d\si = C \|f\|_\hp^p
\]
for all $f \in H^p \cap C(\bbd)$.
The proof of Theorem~\ref{t_Carl_Hp} is finished.

\subsection{Comments and remarks}
\begin{rem}
Equivalence of properties~(i) and (ii) in Theorem~\ref{t_Carl_Hp} means that
to prove the reverse Carleson inequality, it suffices to verify this inequality
on the reproducing kernels $k_w$, $w\in\bd$.
In other words, the reproducing kernel thesis holds for the problem
on reverse Carleson measures for $\hp$, $1<p<\infty$.
\end{rem}

\begin{rem}
On the one hand, property~(iii) guarantees that Lebesgue measure $\si$, defined on the sphere $\spd$,
is absolutely continuous with respect to the restriction $\mu_{|\spd}$.
On the other hand, as it is shown in the proof of Theorem~\ref{t_Carl_Hp},
the Radon--Nikodym derivative $d\mu^a_{|\spd}/d\si$ is separated
from zero by a positive constant.
Here $\mu^a_{|\spd}$ denotes
the absolutely continuous part (with respect to $\si$) of $\mu_{|\spd}$.
\end{rem}

The following assertion shows that condition~(iii) from Theorem~\ref{t_Carl_Hp}
is equivalent to its analog in terms of the sets $S_Q$
used to characterize the classical Carleson measures.

\begin{pr}\label{p_p4}
Equivalent properties from Theorem~\ref{t_Carl_Hp} hold if and only if
there exists a constant $C> 0$ such that
\begin{equation}\label{e_iv}
\mu(S_\qball) \ge C \si(\qball)\quad\textrm{for all nonisotropic balls}\ Q \subset \partial\bd.
\end{equation}
\end{pr}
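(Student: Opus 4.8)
The plan is to prove the equivalence by establishing two chains of implications linking \eqref{e_iv} to condition~(iii) of Theorem~\ref{t_Carl_Hp}, namely $\mu(Q)\ge C\si(Q)$ for all nonisotropic balls $Q\subset\spd$. The easy direction is $(\mathrm{iii})\Rightarrow\eqref{e_iv}$: since $S_Q$ contains the spherical "cap" part of its closure and the restriction argument in the proof of Theorem~\ref{t_Carl_Hp} shows that $d\mu^a_{|\spd}/d\si\ge C>0$ $\si$-almost everywhere, we get $\mu(S_Q)\ge\mu_{|\spd}(Q)\ge\int_Q g\,d\si\ge C\si(Q)$, where $g=d\mu_{|\spd}/d\si$. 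Actually it is cleaner to argue directly: $S_Q$ is a subset of $\bbd$ whose closure meets $\spd$ precisely in $Q$ (up to the boundary circle of $Q$, which is $\si$-null), so $\mu(S_Q)\ge\mu(\overline{S_Q}\cap\spd)=\mu_{|\spd}(Q)$ modulo a null set, and then (iii) applies. This handles one direction with no real work.

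For the reverse direction $\eqref{e_iv}\Rightarrow(\mathrm{iii})$, the idea is that the "tube" part of $S_Q$ sitting strictly inside the ball carries only a controlled amount of $\mu$-mass, so most of the mass of $S_Q$ must actually sit on the boundary sphere, where it reduces to the restriction $\mu_{|\spd}$. Concretely, I would fix $\za\in\spd$ and $\delta>0$, write $Q=Q(\za,\delta)$, and for a scale parameter $0<t<1$ split
\[
\mu(S_{Q,t\si(Q)})=\mu\big(S_{Q,t\si(Q)}\cap\bd\big)+\mu_{|\spd}\big(Q\big),
\]
using the $h$-windows $S_{Q,h}$ from \eqref{e_hwind} with $h=t\si(Q)$. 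The first term is a Carleson-window integral over a thin shell; to bound it I would invoke that condition~(iii)—or rather its consequences—need not yet be available, so instead I must bound $\mu(S_{Q,h}\cap\bd)$ independently. Here is where one uses property~(ii) of Theorem~\ref{t_Carl_Hp} in the \emph{forward} form is not available either; instead the natural tool is to observe that by shrinking $h$ one can make $S_{Q,h}\cap\bd$ an arbitrarily small neighbourhood of $Q$ in $\bbd$, and apply dominated convergence together with $\mu\in\MM(\bbd)$: since $\mu$ is a finite Borel measure, $\mu\big((S_{Q,h}\cap\bd)\setminus\spd\big)\to 0$ is \emph{false} in general (the shell shrinks toward $Q$, not toward $\emptyset$), so this crude approach fails and one genuinely needs the balayage estimate.

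The correct route, and the step I expect to be the main obstacle, is to reuse Lemma~\ref{l_balay} exactly as in the proof of $(\mathrm{ii})\Rightarrow(\mathrm{iii})$, but starting from \eqref{e_iv} instead. One shows that \eqref{e_iv} forces a uniform lower bound $\int_{\bbd}|K_w|^p\,d\mu\ge C$ over $w$ lying "deep" inside $S_Q$; averaging such estimates over a Carleson $h$-window and applying Lemma~\ref{l_balay}(a)–(b) as before isolates $\mu(Q)$ and gives (iii). Alternatively—and this is likely what the paper does—since $(\mathrm{iii})\Rightarrow(\mathrm{i})\Rightarrow(\mathrm{ii})$ is already established and $(\mathrm{ii})\Rightarrow(\mathrm{iii})$ too, it suffices to close the loop by showing $\eqref{e_iv}\Rightarrow(\mathrm{ii})$: take $w=(1-h)\za'$ with $\za'\in Q$ and $h\asymp\si(Q)$, note that $|K_w|^p\ge c\,h^{-d}\asymp\si(Q)^{-d}$ on a fixed fraction of $S_Q$ of $\mu$-measure $\gtrsim\si(Q)$ by \eqref{e_iv}, but this needs a \emph{uniform} lower bound on $|K_w|$ over a set of controlled $\mu$-measure, which \eqref{e_iv} does not directly supply—so one instead integrates: $\int_{\bbd}|K_w|^p\,d\mu\ge\int_{S_Q}|K_w|^p\,d\mu$, and a reverse-Lemma~\ref{l_balay}-type lower bound $|K_w(z)|^p\ge c$ for $z\in S_Q$ combined with $\mu(S_Q)\ge C\si(Q)$ and $\|K_w\|_{H^p}=1$ gives the bound. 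I would carry out this last estimate carefully: for $z\in S_Q(\za,\delta)$ one has $|1-\langle z,w\rangle|\le C\delta\asymp C(1-|w|^2)$ when $w$ is chosen at the appropriate depth, hence $|k_w(z)|\ge c(1-|w|^2)^{-d}$ and $|K_w(z)|^p\ge c$ uniformly, so $\int|K_w|^p\,d\mu\ge c\,\mu(S_Q)\cdot\si(Q)^{-1}\cdot\si(Q)/\si(Q)\ge c$ after tracking the normalization $\|k_w\|_{H^p}^{-p}=(1-|w|^2)^{d}$; this yields (ii), closing the equivalence.
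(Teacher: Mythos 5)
Your final strategy is correct, but it is genuinely different from the paper's argument, and your guess that ``this is likely what the paper does'' is not. The paper never returns to the kernels: it proves \eqref{e_iv}$\Rightarrow$(iii) directly by a packing argument --- a maximal disjoint family of nonisotropic balls $Q_j$ of radius $h$ inside $2Q$, with \eqref{e_iv} applied to each $S_{Q_j}$ (which at that scale is an $h$-window) and summed, gives $\mu(S_{Q,h})\ge C(d)\,\si(Q)$ uniformly in small $h$; then outer regularity of the finite Borel measure $\mu$ on $\bbd$ lets $h\to 0$ and captures $\mu(Q)$ itself. The opposite direction is the same trivial inclusion you use (and note that both you and the paper must read $S_Q$ as a subset of $\bbd$ containing its spherical cap $Q$, as in the $d=1$ definition with $\overline{\Dbb}$; otherwise (iii)$\Rightarrow$\eqref{e_iv} is simply false, e.g.\ for $\mu=\si$, and your inequality $\mu(S_Q)\ge\mu(\overline{S_Q}\cap\spd)$ would not follow from monotonicity). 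Your route instead closes the loop through property (ii) of Theorem~\ref{t_Carl_Hp}: given $w$, choose a window adapted to $w$, bound the normalized kernel from below on it, and then invoke the already established equivalence (i)$\Leftrightarrow$(ii)$\Leftrightarrow$(iii). This works and is shorter on paper, but only because it recycles the balayage machinery of Lemma~\ref{l_balay} through Theorem~\ref{t_Carl_Hp}; the paper's proof is purely measure-theoretic and self-contained.

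Two details in your sketch need repair, though neither is fatal. First, the adaptation must be $\delta\asymp 1-|w|$, i.e.\ $Q=Q\bigl(w/|w|,\,c(1-|w|)\bigr)$: your prescription ``$h\asymp\si(Q)$'', i.e.\ $1-|w|\asymp\delta^{d}$, makes the cap far too wide when $d\ge 2$, and the lower bound for $|K_w|$ fails near the edge of $S_Q$ (your later clause ``$\delta\asymp 1-|w|^2$'' is the correct one). Second, $\|k_w\|_{\hp}^{-p}=(1-|w|^2)^{pd/q}$, not $(1-|w|^2)^{d}$; these agree only for $p=2$. With the correct exponents the computation does close: for $z\in S_Q$ one has $|1-\langle z,w\rangle|\lesssim 1-|w|^2$, hence $|K_w(z)|^p\gtrsim(1-|w|^2)^{-d}\asymp\si(Q)^{-1}$ (not merely $\ge c$, which would be useless here), so $\int_{\bbd}|K_w|^p\,d\mu\ge\si(Q)^{-1}\mu(S_Q)\gtrsim 1$ by \eqref{e_iv}, which is (ii); the case $|w|\le\tfrac12$ is covered by taking a fixed ball $Q$.
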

\begin{proof}
Clearly, property~(iii) from Theorem~\ref{t_Carl_Hp} implies \eqref{e_iv}.
To prove the reverse implication, assume that property~\eqref{e_iv} holds.
Fix a $\delta>0$ and a nonisotropic ball $Q = Q(\za, \delta)\subset\spd$ for some point $\za\in\spd$.
For $0< h < \delta$, choose a maximal family of pairwise disjoint nonisotropic balls
$Q_j = Q_j(\za_j, h)\subset 2Q$, $j=1, \dots, J$.
Observe that $Q \subset\cup_{1\le j \le J} 2Q_j$.
Since the $h$-window $S_{Q_j, h}$ coincides with the set $S_{Q_j}$ for $j=1,\dots, J$,
condition~(iii) guarantees that
\begin{equation}\label{e_mu_si}
\begin{split}
   C_0(d)\mu(S_{Q,h}) =\mu(S_{2Q,h})
&\ge \mu \sum_{j=1}^{J} S_{Q_j,h}
 =\sum_{j=1}^{J}
 \mu(S_{Q_j,h})
 \\
&\overset{\mathrm{(iii)}}\ge
 C \sum_{j=1}^{J} \si(Q_j)
 = C(d)\sum_{j=1}^{J} \si(2Q_j)
 \\
&\ge  C(d) \si(Q).
\end{split}
\end{equation}
If $Q\subset G \subset\bbd$ and the set $G$ is relatively open in $\bbd$,
then $G\supset S_{\qball,h}$ for a sufficiently small parameter $h>0$.
Thus, applying \eqref{e_mu_si} and the outer regularity of the measure
 $\mu\in\MM(\bbd)$, we have
\[
\begin{split}
\mu(\qball )
&= \inf\{\mu(G): \qball\subset G,\ G \ \textrm{is open in}\ \bbd\}
\\
&\ge \inf_{h>0}\mu(S_{\qball,h})
 \ge C(d)\si(\qball),
\end{split}
\]
as required.
\end{proof}

\section{Reverse Carleson measures for de Branges--Rovnyak spaces}\label{s_Hb}
To define a reverse Carleson measure $\mu$ for $H^p(\bd)$,
we compute the norm
$\|f\|_{L^p(\overline{B}_d,\mu)}$
only for $f\in H^p(B_d) \cap C(\overline{B}_d)$, see \eqref{e_revrese}.
For the spaces $\dbr(\bdd)$, we apply a different approach based
on the following notion.

\begin{df}
Let $\mu\in\MM(\bbd)$. A function $f\in\hol(\bd)$
is called $\mu$-admissible if $f$ has radial limits $\mu$-almost everywhere
on the sphere $\spd$.
Given a holomorphic function $\bdd: \bd\to\Dbb$,
the set of all $\mu$-admissible functions in the space
$\dbr(b)$ is denoted by $\dbr_\mu(b)$.
\end{df}

Now, assume that $f\in \dbr_\mu(b)$.
On the support of the measure $\mu_{\left|\spd\right.}$, the function $f$
is defined via its radial limits.
As a result, the quantity $\|f\|_{L^2(\overline{B}_d,\mu)}$ takes
value in the infinite interval $[0,+\infty]$.

\begin{df}
Let $\bdd: \bd\to\Dbb$ be a holomorphic function.
A measure $\mu\in\MM(\bbd)$ is called a reverse Carleson measure for the space $\dbr(\bdd)$
if
 \[
\|f\|_{\dbr(b)} \le C ||f\|_{L^2(\bbd, \mu)}
 \]
for all $f\in \dbr_\mu(b)$.
\end{df}

The main result of the present section is the following theorem on properties of
the reverse Carleson measures.
For $d=1$, this result was obtained in \cite{BFGHR14}.

\begin{theorem}\label{t_t2}
Let $\mu\in\MM(\bbd)$ and $\bdd: \bd\to\Dbb$ be a $\mu$-admissible holomorphic function.
Put
\[
g = \frac{d\mu_{\left|\spd\right.}}{dm}.
\]
Assume that $\mu$ is a reverse Carleson measure for the space $\dbr(b)$.
Then
\begin{equation}\label{e_t2_ness}
1-|b(\za)|^2 \le C (1-|b(\za)|^2)^2 g(\za)
\end{equation}
for $\sid$-almost all points $\za\in\spd$ and a universal constant $C>0$.
\end{theorem}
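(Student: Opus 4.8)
The plan is to test the reverse Carleson inequality on a family of functions in $\dbr(b)$ whose norm is easy to compute and which concentrate near a fixed boundary point, namely the de Branges--Rovnyak reproducing kernels
\[
\deker^b_w(z) = \frac{1-\overline{b(w)}b(z)}{(1-\langle z, w\rangle)^d}, \quad w\in\bd.
\]
One has $\|\deker^b_w\|^2_{\dbr(b)} = \deker^b_w(w) = \frac{1-|b(w)|^2}{(1-|w|^2)^d}$, so the reverse Carleson inequality applied to $f=\deker^b_w$ (after checking these kernels are $\mu$-admissible) yields
\[
\frac{1-|b(w)|^2}{(1-|w|^2)^d} \le C \int_{\bbd} |\deker^b_w(z)|^2\, d\mu(z)
= C\int_{\bbd} \frac{|1-\overline{b(w)}b(z)|^2}{|1-\langle z, w\rangle|^{2d}}\, d\mu(z).
\]

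Next I would estimate the right-hand side. Since $b$ maps into $\Dbb$, we have $|1-\overline{b(w)}b(z)| \le 1+|b(w)|$, but this crude bound loses the factor $1-|b(w)|^2$ we need; instead I would split the integral over $\spd$ (using radial limits of $b$, which exist $\mu_{|\spd}$-a.e. by $\mu$-admissibility) and over $\bd$, and on the sphere part exploit that $|1-\overline{b(w)}b(\za)|^2 \le (1-|b(w)|^2)\cdot\frac{|1-\overline{b(w)}b(\za)|^2}{1-|b(w)|^2}$; more usefully, one writes $|1-\overline{b(w)}b(\za)|^2 \le 2(1-|b(w)|^2) + 2|b(w)|^2|b(\za)-b(w)|^2$ type inequalities, or observes $1-\mathrm{Re}(\overline{b(w)}b(\za)) \ge \tfrac12(1-|b(w)|^2)$ is false in general — so the correct route is to bound $|1-\overline{b(w)}b(z)|^2 \le 4$ only where it helps and use the measure comparison from Theorem~\ref{t_Carl_Hp} elsewhere. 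The cleaner approach: apply the reverse inequality and then compare $\int |\deker^b_w|^2 d\mu$ against $\int |K_w^2|^2 d\mu$ where $K_w^2 = k_w/\|k_w\|_2 = (1-|w|^2)^{d/2}k_w$, so that $|\deker^b_w(z)|^2 = (1-|w|^2)^{-d}|1-\overline{b(w)}b(z)|^2|K_w^2(z)/(1-|w|^2)^{d/2}\cdot(1-|w|^2)^{d/2}|^2$, giving
\[
\int_{\bbd} |\deker^b_w(z)|^2\, d\mu(z) = (1-|w|^2)^{-d}\int_{\bbd} |1-\overline{b(w)}b(z)|^2\, |K^2_w(z)|^2\, d\mu(z).
\]
Since $\mu$ is automatically a Carleson measure for $H^2$ (finite on $\bbd$ with the needed window bound coming for free? — no, $\mu$ need not be Carleson), one must instead use only the absolutely continuous part on $\spd$: restrict the integral to $\spd$, drop the interior, and write $\int_{\spd} |1-\overline{b(w)}b(\za)|^2 \frac{(1-|w|^2)^{d}}{|1-\langle w,\za\rangle|^{2d}} g(\za)\, dm(\za)$.

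Then the crux is a localization/Lebesgue-point argument: as $w\to\za_0$ radially, the Poisson-type kernel $\frac{(1-|w|^2)^d}{|1-\langle w,\za\rangle|^{2d}}$ is an approximate identity on $\spd$ (up to the normalizing constant $\sim (1-|w|^2)^{-d}\si(Q)^{-1}$ on the window), so for $\sid$-a.e.\ $\za_0$ the quantity $\frac{1}{(1-|w|^2)^d}\int_{\spd}|1-\overline{b(w)}b(\za)|^2 \frac{(1-|w|^2)^{2d}}{|1-\langle w,\za\rangle|^{2d}} g(\za)\,dm(\za)$ tends to $|1-\overline{b(\za_0)}b(\za_0)|^2 g(\za_0) = (1-|b(\za_0)|^2)^2 g(\za_0)$, using that $b$ has nontangential limit $b(\za_0)$ (finite, $\mu$-a.e., and $|b(\za_0)|\le1$) at a.e.\ point and that $g\in L^1$. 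Combined with $1-|b(w)|^2 \to 1-|b(\za_0)|^2$ along the radius, the displayed inequality $\frac{1-|b(w)|^2}{(1-|w|^2)^d} \le C\cdot(1-|w|^2)^{-d}\cdot(\text{the integral})$ passes to the limit and gives exactly \eqref{e_t2_ness}.

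The main obstacle I anticipate is making the approximate-identity step rigorous simultaneously in the two places where $w$ appears: one needs a single sequence $w_k\to\za_0$ along which (a) $b(w_k)\to b(\za_0)$, (b) the averaged integral converges to the Lebesgue value of $|1-\overline{b(\za_0)}b(\cdot)|^2 g$ at $\za_0$, and (c) $\za_0$ is simultaneously a Lebesgue point of $g$ and of $b_{|\spd}$ (as an $L^1_{\mathrm{loc}}(\mu_{|\spd})$ or $L^\infty$ boundary function). Points (a) and (b) interact because $|1-\overline{b(w_k)}b(\za)|^2$ itself depends on $k$; I would handle this by first replacing $b(w_k)$ by $b(\za_0)$ inside the integrand at the cost of an error controlled by $|b(w_k)-b(\za_0)|\cdot\big(\text{bounded kernel mass}\big)$, which vanishes by (a), and only then invoking the scalar approximate-identity convergence $\frac{1}{c_w}\int \frac{(1-|w|^2)^{2d}}{|1-\langle w,\za\rangle|^{2d}} \varphi(\za)\,dm(\za) \to \varphi(\za_0)$ for $\varphi = |1-\overline{b(\za_0)}b(\cdot)|^2 g \in L^1(m)$ at its Lebesgue points — a standard fact for the ball, cf.\ \cite[Ch.~5]{Ru80}. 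A secondary technical point is verifying that the de Branges--Rovnyak kernels $\deker^b_w$ lie in $\dbr_\mu(b)$, i.e.\ that they have radial limits $\mu$-a.e.; this follows since $b$ is $\mu$-admissible and $(1-\langle z,w\rangle)^{-d}$ is continuous up to $\bbd\setminus\{w/|w|\}$, so $\deker^b_w$ has radial limits wherever $b$ does, hence $\mu$-a.e. With these in hand the argument is otherwise a direct limit computation.
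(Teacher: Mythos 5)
Your opening step coincides with the paper's: test the reverse Carleson inequality on the kernels $\deker^b_w$ (which are $\mu$-admissible because $b$ is), giving
$\frac{1-|b(w)|^2}{(1-|w|^2)^d}\le C\int_{\bbd}|\deker^b_w|^2\,d\mu$,
and then localize at a.e.\ boundary point. The genuine gap is in how you pass from this inequality, which involves the \emph{whole} measure $\mu$ on its majorizing side, to an inequality involving only $g=d\mu_{|\spd}/d\sigma$. You propose to ``restrict the integral to $\spd$, drop the interior'' and then run an approximate-identity argument against $g\,d\sigma$. But the interior part $\mu_{|\bd}$ and the singular part of $\mu_{|\spd}$ sit on the large side of the inequality: discarding them weakens the right-hand side, and the tested inequality does not imply the restricted one. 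What must actually be proved is that these two contributions are negligible in the limit. The paper does exactly this: it writes $\mu=\rho+\eta+g\sigma$ ($\rho$ the interior part, $\eta$ the singular part on $\spd$), averages the tested inequality in $w$ over Carleson windows $S_{Q,h}$ with respect to volume measure and divides by $h$; Lemma~\ref{l_balay}/Lemma~\ref{l_r47} give a uniform bound plus pointwise decay of the averaged kernel for $z\in\bd$ and for $z\in\spd\setminus Q$, so dominated convergence against the finite measures $\rho$ and $\eta$ kills those terms, and the leftover $\eta(Q)$ is removed at the very end by the differentiation theorem for singular measures (\cite[Theorem~5.3.2]{Ru80}). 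In your purely radial scheme, the analogue would be to show $\lim_{r\to1}(1-r^2)^d\int_{\bd}|1-\langle z,r\za\rangle|^{-2d}d\rho(z)=0$ and $\lim_{r\to1}P[\eta](r\za)=0$ for $\sigma$-a.e.\ $\za$; the second is classical, but the first is not an off-the-shelf fact (dominated convergence fails, and one needs a Carleson-box maximal function / weak-type argument), and you neither supply it nor even mention that the term exists. The singular part $\eta$ is never addressed at all.

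By contrast, your treatment of the absolutely continuous term is sound in spirit and close to the paper's Lemma~\ref{l_r47}(b): freezing $b(w)$ at $b(\za_0)$ at the cost of $|b(r\za_0)-b(\za_0)|\cdot P[g](r\za_0)\to 0$, and then using Lebesgue points of $|1-b(\cdot)\overline{b(\za_0)}|^2g$ (with the usual countable-dense-parameter device to handle the $\za_0$-dependence) is workable; the paper performs the same localization but in the window-averaged form, which is precisely what lets it dispose of $\rho$ and $\eta$ simultaneously. So the proposal is not a complete proof: to repair it you must either adopt the paper's averaging over $S_{Q,h}$ followed by differentiation over nonisotropic balls, or else prove the two a.e.\ radial vanishing statements for the interior and singular parts that your ``drop the interior'' step silently assumes.
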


On the one hand, if $|b(\za)|=1$, then condition \eqref{e_t2_ness} imposes no restriction
at the point $\za$.
In particular, this condition degenerates completely for
an inner function $b$.
On the other hand, condition~\eqref{e_t2_ness} naturally simplifies for $|b(\za)|<1$
and provides the following explicit information.

\begin{cory}\label{c_noninner}
Let $\mu\in\MM(\bbd)$ and $b: \bd\to \Dbb$ be a $\mu$-admissible non-inner holomorphic function.
Assume that $\mu$ is a reverse Carleson measure for the space $\dbr(b)$.
Then
\[
\frac{d\mu_{\left|\spd\right.}}{d\si} \neq \mathbf{0}
\]
and
\begin{equation}\label{e_1minusb}
 \int_{ \{\za\in\spd:\, |b(\za)| < 1\} }
\frac{1}{1-|b|}
\,d\si < \infty.
\end{equation}
\end{cory}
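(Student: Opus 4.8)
The plan is to obtain Corollary~\ref{c_noninner} as a direct unwinding of the pointwise estimate \eqref{e_t2_ness} supplied by Theorem~\ref{t_t2}. Set
\[
E = \{\za\in\spd:\ |b(\za)| < 1\}.
\]
Since $b$ is non-inner, by the definition of an inner function we have $\sid(E) > 0$. On $E$ the factor $1-|b(\za)|^2$ is strictly positive, so dividing both sides of \eqref{e_t2_ness} by $(1-|b(\za)|^2)^2$ yields
\[
\frac{1}{1-|b(\za)|^2} \le C\, g(\za) \qquad\textrm{for $\sid$-a.e. } \za\in E,
\]
where $g = d\mu_{\left|\spd\right.}/d\si$ is the density of the absolutely continuous part of $\mu_{\left|\spd\right.}$ with respect to $\si$, as in Theorem~\ref{t_t2}.

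From this single inequality I would read off both conclusions. First, since $1/(1-|b(\za)|^2) \ge 1$ on $E$ and $\sid(E) > 0$, the inequality forces $g \ge 1/C > 0$ on a set of positive $\si$-measure, so $g$ is not $\si$-a.e. zero; that is, $d\mu_{\left|\spd\right.}/d\si \neq \mathbf{0}$. Second, integrating over $E$ and using that $g\in L^1(\si)$ with $\int_{\spd} g\,d\si \le \mu(\bbd) < \infty$ gives
\[
\int_E \frac{1}{1-|b|^2}\,d\si \le C\int_E g\,d\si \le C\mu(\bbd) < \infty.
\]
Finally, on $E$ one has $1-|b| \le 1-|b|^2 = (1-|b|)(1+|b|) \le 2(1-|b|)$, hence $\frac{1}{1-|b|} \le \frac{2}{1-|b|^2}$ there, and \eqref{e_1minusb} follows.

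I do not expect a genuine obstacle here, since the corollary is essentially algebraic manipulation of \eqref{e_t2_ness}. The only points needing care are bookkeeping: \eqref{e_t2_ness} holds only $\sid$-almost everywhere, so all steps must be carried out modulo $\si$-null sets, and one must invoke $\sid(E) > 0$ to make the assertion $d\mu_{\left|\spd\right.}/d\si\neq\mathbf{0}$ non-vacuous; and one must keep in mind that the quantity $g$ is the Radon--Nikodym derivative of the absolutely continuous part of $\mu_{\left|\spd\right.}$, which lies in $L^1(\si)$ with total mass at most $\mu(\bbd)$. Both are standard facts about the Lebesgue decomposition of a finite Borel measure.
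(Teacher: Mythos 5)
Your proposal is correct and follows essentially the same route as the paper: both divide \eqref{e_t2_ness} by $(1-|b|^2)^2$ on the set $\{|b|<1\}$ (which has positive $\si$-measure since $b$ is non-inner), read off that $g$ cannot vanish $\si$-a.e., and then integrate the pointwise bound $\frac{1}{1-|b|}\le C g$ using $g\in L^1(\si)$. The only cosmetic difference is that the paper absorbs the elementary comparison $1-|b|\ge\frac12(1-|b|^2)$ into the constant without comment.
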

\begin{proof}
Firstly, by Theorem \ref{t_t2}, inequality~\eqref{e_t2_ness} holds,
therefore,
$g \neq \mathbf{0}$ because $b$ is not an inner function.
Secondly, property~\eqref{e_t2_ness} implies that
\[
\frac{1}{1-|b(\xi)|} \le C g(\xi)
\]
for $\si$-almost all $\xi\in \{\za\in\spd:\, |b(\za)| < 1\}$.
Since $g \in L^1(\spd, \si)$, we obtain the required property~\eqref{e_1minusb}.
\end{proof}

Remark that property~\eqref{e_1minusb} imposes a restriction on those functions $\bdd$ for which
the set of the reverse Carleson measures for $\dbr(\bdd)$ is not empty.

\begin{cory}\label{c_AB_exmpl}
There exists a holomorphic function $\bdd: \bd\to \Dbb$ with the following property:
if $\mu\in\MM(\bbd)$ and
$b$ is $\mu$-admissible, then $\mu$ is not a reverse Carleson measure for the space $\dbr(\bdd)$.
\end{cory}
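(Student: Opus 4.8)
The plan is to build a single holomorphic function $b:\bd\to\Dbb$ that violates the necessary condition~\eqref{e_1minusb} from Corollary~\ref{c_noninner} in the strongest possible way: I want $b$ to be non-inner (so that Corollary~\ref{c_noninner} applies), yet to satisfy
\[
\int_{\{\za\in\spd:\,|b(\za)|<1\}} \frac{1}{1-|b|}\,d\si = \infty .
\]
Once such a $b$ is in hand, the corollary is immediate: if some $\mu\in\MM(\bbd)$ for which $b$ is $\mu$-admissible were a reverse Carleson measure for $\dbr(b)$, then Corollary~\ref{c_noninner} would force the integral above to be finite, a contradiction. So the whole content of the statement is the \emph{construction}.

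First I would reduce to the one-variable situation. If $b_0:\Dbb\to\Dbb$ is holomorphic, set $b(z)=b_0(z_1)$ for $z=(z_1,\dots,z_d)\in\bd$; this is holomorphic on $\bd$, maps into $\Dbb$, and its boundary modulus is controlled by the boundary behaviour of $b_0$ under the slice projection $\za\mapsto\za_1$. Using the explicit pushforward of $\si_d$ under $\za\mapsto\za_1$ (a smooth density on $\Dbb$, positive in the interior, comparable to $(1-|\za_1|^2)^{d-2}$ up to constants for $d\ge 2$ and equal to $m$ for $d=1$), the divergence of the $\spd$-integral for $b$ follows from divergence of a corresponding integral over a subarc of $\Tbb$ — or even over a radial-type region — for $b_0$. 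So it suffices to produce $b_0:\Dbb\to\Dbb$ holomorphic, non-inner, with $1-|b_0|$ vanishing fast enough along a large enough boundary set that $\int (1-|b_0|)^{-1}$ over $\{|b_0|<1\}\cap\Tbb$ diverges.

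Next I would construct $b_0$. The natural device is a conformal-type map or a Blaschke-product/singular-inner-function trick arranged so that on a positive-measure subset $E\subset\Tbb$ the boundary values $b_0(\za)$ lie strictly inside $\Dbb$ but approach $\partial\Dbb$ at a controlled rate. One clean choice: take $b_0=\frac{1+u}{2}$ or more generally an affine image of an outer function $u$ with $|u|=1$ on part of $\Tbb$ and $|u|$ small on $E$; then $1-|b_0|$ is comparable to a quantity one can make as small as one likes on $E$, e.g. of order $\psi(\za)$ for a prescribed positive $\psi\in L^1(\Tbb)$ with $\int_E \psi^{-1}\,dm=\infty$ (such $\psi$ exists — take $\psi$ comparable to the distance to a fixed boundary point raised to a power $\ge 1$ on $E$). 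The only care needed is to keep $b_0$ genuinely non-inner, i.e. $|b_0|<1$ on a set of positive measure, which is automatic in this construction since $|b_0|\le 1-c\psi<1$ on $E$.

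The main obstacle is the bookkeeping in the reduction from $\bd$ to $\Dbb$: one must check that the non-isotropic boundary set on which $|b(\za)|<1$ has positive $\si_d$-measure and that the divergence of $\int 1/(1-|b|)$ survives the pushforward, which for $d\ge 2$ means controlling the extra factor $(1-|\za_1|^2)^{d-2}$ against the prescribed smallness of $1-|b_0|$ near the relevant boundary point of $\Dbb$; choosing $\psi$ to vanish at a \emph{single} point $\lambda_0\in\Tbb\subset\partial\Dbb$ and pairing it with the fibre over $\lambda_0$ keeps this harmless, since the density is bounded below on compact subsets of $\Dbb$ and one can localize $E$ away from $\pm\lambda_0$-type degeneracies. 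A secondary point is verifying that $b$ is holomorphic into $\Dbb$ and that one may always find a $\mu$ making $b$ $\mu$-admissible (or simply note that if no such $\mu$ exists the conclusion holds vacuously), but these are routine once the divergent-integral function $b_0$ is fixed.
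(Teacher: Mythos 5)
Your overall strategy --- exhibit a non-inner $b$ that violates the necessary condition \eqref{e_1minusb} and then invoke Corollary~\ref{c_noninner} --- is exactly the paper's strategy, but the construction you propose has a genuine gap in dimensions $d\ge 3$. For $d\ge 2$ the pushforward of $\si=\si_d$ under the slice projection $\za\mapsto\za_1$ is $c_d(1-|w|^2)^{d-2}\,d\nu_1(w)$ on the \emph{open} disk $\Dbb$ (here $\nu_1$ is area measure); it carries no mass on $\Tbb$, and the fibre over a boundary point $\lambda_0\in\Tbb$ is a single point of $\si_d$-measure zero. Consequently, for a lifted function $b(z)=b_0(z_1)$ the boundary values $b(\za)=b_0(\za_1)$ are \emph{interior} values of $b_0$, and
\begin{equation*}
\int_{\spd}\frac{d\si}{1-|b|}
= c_d\int_{\Dbb}\frac{(1-|w|^2)^{d-2}}{1-|b_0(w)|}\,d\nu_1(w),
\end{equation*}
so prescribing the boundary modulus of $b_0$ on an arc $E\subset\Tbb$ (your outer-function device) is irrelevant to the divergence you need. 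Worse, the Schwarz--Pick inequality gives $1-|b_0(w)|\ge \tfrac12\bigl(1-|b_0(0)|\bigr)(1-|w|)$ for \emph{every} holomorphic $b_0:\Dbb\to\Dbb$, so the integrand above is $O\bigl((1-|w|)^{d-3}\bigr)$ and the integral is finite for all $b_0$ as soon as $d\ge 3$. Thus no slice-lifted function can violate \eqref{e_1minusb} in dimension $d\ge 3$; your reduction works only for $d=1$ (where it is the classical outer-function construction) and, incidentally, for $d=2$, where already $b(z)=z_1$ makes the integral diverge.

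The missing ingredient is precisely what the paper uses: Aleksandrov's theorem \cite[Corollary~2]{A82e}, which, given a continuous $\psi$ on $\spd$ with $\psi(\za_0)=1$, $0<\psi<1$ off $\za_0$, and $\int_{\spd}(1-\psi)^{-1}\,d\si=\infty$, produces a genuinely $d$-variable holomorphic $b:\bd\to\Dbb$ with $|b|=\psi$ $\si$-almost everywhere on $\spd$. This prescribed-boundary-modulus result is deep (it is part of the circle of ideas around the existence of inner functions in the ball) and cannot be replaced by one-variable constructions plus slicing; once it is invoked, the rest of your argument (non-innerness, failure of \eqref{e_1minusb}, contradiction with Corollary~\ref{c_noninner}, and the vacuous case when no admissible $\mu$ exists) goes through as in the paper.
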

\begin{proof}
Choose a positive function $\psi\in C(\spd)$ such that
$\psi(\za_0)=1$, $0< \psi(\za) <1$ for $\za\in\spd\setminus \{\za_0\}$ and
\[
 \int_{\spd}
\frac{1}{1-\psi}
\,d\si = \infty.
\]
By \cite[Corollary~2]{A82e},
there exists a holomorphic function $\bdd: \bd\to \Dbb$ such that
$|\bdd| = \psi$ $\si$-almost everywhere on the sphere $\spd$.
Therefore, if $\mu\in\MM(\bbd)$ and
$\bdd$ is $\mu$-admissible, then condition \eqref{e_1minusb} does not hold.
Hence, $\mu$ is not a reverse Carleson measure for $\dbr(\bdd)$ by Corollary~\ref{c_noninner}.
\end{proof}

Further, Corollary~\ref{c_noninner}
provides negative information on the sampling sequences for $\dbr(\bdd)$.
Recall that
$\{w_j\}_{j=1}^\infty \subset \bd$ is called a sampling sequence for $\dbr(\bdd)$
provided that there exist constants
$C_2\ge C_1>0$ such that
\[
C_1 \|f\|^2_{\dbr(\bdd)} \le
\sum_{j=1}^\infty
\frac{|f(w_j)|^2}{\ \|\deker_{w_j}\|^2_{\dbr(\bdd)}}
\le C_2 \|f\|^2_{\dbr(\bdd)}
\]
for all $f\in \dbr(\bdd)$.
For $d=1$, the following assertion is proved in \cite{BFGHR14}.

\begin{cory}\label{c_sampli}
Let $\bdd: \bd\to \Dbb$ be a non-inner holomorphic function.
Then there is no sampling sequence for the space $\dbr(\bdd)$.
\end{cory}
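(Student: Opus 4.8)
The strategy is to argue by contradiction: suppose $\{w_j\}_{j=1}^\infty$ is a sampling sequence for $\dbr(b)$, and derive from it a reverse Carleson measure for $\dbr(b)$, which will contradict Corollary~\ref{c_noninner} since $b$ is non-inner. The natural candidate measure is the atomic measure
\[
\mu = \sum_{j=1}^\infty \frac{\delta_{w_j}}{\|\deker_{w_j}\|_{\dbr(b)}^2},
\]
where $\delta_{w_j}$ is the unit point mass at $w_j$. First I would check that $\mu\in\MM(\bbd)$: the upper sampling inequality applied to any single normalized reproducing kernel (or to the constant function, if it lies in $\dbr(b)$, or more robustly via a standard argument using that the reproducing kernels have uniformly controlled behaviour) shows that the total mass $\sum_j \|\deker_{w_j}\|_{\dbr(b)}^{-2}$ is finite; in any case the upper bound in the sampling definition forces this series to converge, so $\mu$ is a finite positive Borel measure on $\bbd$ (in fact supported on $\bd$, possibly with accumulation points on $\spd$).

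Next I would observe that every $f\in\dbr(b)$ is automatically $\mu$-admissible in the trivial sense needed here, because $\mu$ is supported in the open ball $\bd$ where $f$ is already holomorphic, so $\mu_{|\spd}=0$ and the admissibility condition (existence of radial limits $\mu$-a.e. on $\spd$) is vacuous. Consequently $\dbr_\mu(b)=\dbr(b)$, and for $f\in\dbr_\mu(b)$ one has, using the reproducing property $f(w_j)=\langle f,\deker_{w_j}\rangle_{\dbr(b)}$,
\[
\|f\|_{L^2(\bbd,\mu)}^2 = \sum_{j=1}^\infty \frac{|f(w_j)|^2}{\|\deker_{w_j}\|_{\dbr(b)}^2} \ge C_1 \|f\|_{\dbr(b)}^2
\]
by the \emph{lower} sampling inequality. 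Thus $\mu$ satisfies exactly the defining inequality of a reverse Carleson measure for $\dbr(b)$.

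Finally, I would invoke Corollary~\ref{c_noninner}: since $b$ is a non-inner holomorphic function and $\mu$ is a reverse Carleson measure for $\dbr(b)$ with $b$ trivially $\mu$-admissible, we must have $d\mu_{|\spd}/d\si\neq\mathbf 0$. But $\mu$ is supported in the open ball, so $\mu_{|\spd}=0$ and its Radon--Nikodym derivative is identically $\mathbf 0$ — a contradiction. Hence no sampling sequence for $\dbr(b)$ can exist. The main point to be careful about is the admissibility bookkeeping: one must make sure that the definition of reverse Carleson measure is being applied to the correct class $\dbr_\mu(b)$, and verify that for an atomic measure inside $\bd$ this class is all of $\dbr(b)$; once that is settled, the lower sampling bound gives the reverse Carleson inequality verbatim and the contradiction with Corollary~\ref{c_noninner} is immediate. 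No delicate estimates are required beyond the finiteness of the total mass of $\mu$, which is a direct consequence of the upper sampling inequality.
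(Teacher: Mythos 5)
Your argument is correct and is essentially the paper's own proof: take $\mu = \sum_j \delta_{w_j}/\|\deker_{w_j}\|^2_{\dbr(b)}$, note that the lower sampling bound makes it a reverse Carleson measure for $\dbr(b)$, and contradict Corollary~\ref{c_noninner} since $d\mu_{|\spd}/d\si=\mathbf{0}$. Your extra remarks on finiteness of the total mass and on the vacuity of $\mu$-admissibility are sensible bookkeeping the paper leaves implicit (for the former, testing the upper sampling bound on the kernel $\deker^b_0=1-\overline{b(0)}\,b$, which is bounded below by $1-|b(0)|>0$, makes it precise).
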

\begin{proof}
Assume that
$\{w_j\}_{j=1}^\infty \subset \bd$ is a sampling sequence for $\dbr(\bdd)$.
Then
\[
\mu = \sum_{j=1}^\infty \frac{\delta_{w_j}}{\|\deker_{w_j}\|^2_{\dbr(\bdd)}}
\]
is a reverse Carleson measure for $\dbr(\bdd)$.
This contradicts Corollary~\ref{c_noninner} because $d\mu_{|\spd}/d\si = \mathbf{0}$.
\end{proof}

Now, we proceed to the proof of Theorem~\ref{t_t2}.

\subsection{Auxiliary results}
Given a holomorphic function $F: \bd\to \Dbb$, recall that
the boundary function $F(\xi)$ is defined for $\si$-almost all $\xi\in\spd$
in terms of radial limits.
The following assertion refines Lemma~\ref{l_balay} for $p=2$.

\begin{lem}\label{l_r47}
Let $Q\subset \spd$ be a nonisotropic ball and $F: \bd\to \Dbb$ be a holomorphic function.
Then the limit
\[
\lim_{r\to 1-}
\int_Q |F(r\xi)| \frac{(1-r^2)^d}{|1- \langle z, r\xi\rangle|^{2d}} d\si(\xi)
\]
\begin{enumerate}
 \item[(a)] is equal to zero for $z\in\bbd\setminus Q$,
 \item[(b)] is equal $|F(z)|$ for $\si$-almost all $z$ in the interior of $Q$.
\end{enumerate}
\end{lem}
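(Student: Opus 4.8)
The plan is to prove Lemma~\ref{l_r47} by treating the two cases separately, with case~(a) being essentially the argument already used in Lemma~\ref{l_balay}(b) and case~(b) reducing to a Fatou-type boundary result for the Poisson--Szeg\H{o} kernel. For part~(a), fix $z\in\bbd\setminus Q$. Then $\mathrm{dist}(z, Q)$ being positive in the nonisotropic metric gives $|1-\langle z, \xi\rangle|\ge\de>0$ uniformly for $\xi\in Q$, and since $r\xi$ stays in a cone pointing at $\xi$ we still have $|1-\langle z, r\xi\rangle|\ge c\de$ for $r$ close to $1$. Using $|F|<1$, the integral is bounded by $\de^{-2d}\int_Q (1-r^2)^d\, d\si(\xi)\le C\si(Q)(1-r^2)^d\to 0$. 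This is immediate and should occupy only a few lines.

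For part~(b), the main point is that the kernel $P_r(z,\xi)=\frac{(1-r^2)^d}{|1-\langle z, r\xi\rangle|^{2d}}$ (up to the normalization constant $c_d$ from \cite[Proposition~1.4.10]{Ru80}) is, for $z$ fixed in the ball and $r\to 1-$, essentially the Poisson--Szeg\H{o} kernel $P(z,\eta)=\frac{(1-|z|^2)^d}{|1-\langle z, \eta\rangle|^{2d}}$ restricted to the sphere against a weight that approximates an identity at the boundary. First I would replace $|F(r\xi)|$ by its boundary value: since $F\in H^\infty$, $F(r\xi)\to F(\xi)$ for $\sigma$-a.e.\ $\xi$ and $|F|\le 1$, so by dominated convergence (the kernel in $\xi$ is integrable over $Q$ uniformly as $r\to 1-$ once $z$ is in the interior of $Q$, because $\int_{\spd} P_r(z,\xi)\,d\si(\xi)$ is bounded) the limit equals $\lim_{r\to 1-}\int_Q |F(\xi)| P_r(z,\xi)\, d\si(\xi)$. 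Then I would push $r$ to $1$ in the kernel: $P_r(z,\xi)\to c_d^{-1} P(z,\xi)$ pointwise and the total masses converge, so by a Scheff\'e-type argument the limit is $c_d^{-1}\int_Q |F(\xi)| P(z,\xi)\, d\si(\xi)$.

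At this stage the claim reduces to showing that, for $\si$-a.e.\ $z$ in the interior of $Q$,
\[
c_d^{-1}\int_Q |F(\xi)|\, P(z,\xi)\, d\si(\xi) = |F(z)|.
\]
Here I would invoke the classical fact (Korányi; see \cite[Ch.~5]{Ru80} or \cite[Ch.~5]{Z05}) that the Poisson--Szeg\H{o} integral of an $L^1(\spd)$ function has admissible, hence radial, limits equal to that function $\si$-a.e.\ Apply this to the function $h=|F|\,\chi_Q\in L^1(\spd)$: its Poisson--Szeg\H{o} extension $u(z)=c_d^{-1}\int_{\spd} h(\xi) P(z,\xi)\, d\si(\xi)$ has radial boundary value $h(\eta)=|F(\eta)|$ for $\si$-a.e.\ $\eta$ in the interior of $Q$. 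But the integral in the displayed identity is $u(z)$ evaluated at $z$ in the interior of $Q$, not a boundary limit—so the final step is to note that we actually need the \emph{value} at interior points, which is exactly $u(z)$, and then compare with $|F(z)|$: both $u$ and $|F|$ are (pluri)subharmonic-type, and on the interior of $Q$ they agree because $u-|F|$ extends continuously to the interior of $Q$ with zero boundary values there while being harmonic off...

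Actually the cleanest route, which I would take instead, is: the expression $\int_Q |F(\xi)|P_r(z,\xi)\,d\si(\xi)$ for $z$ in the interior of $Q$ differs from $\int_{\spd}|F(\xi)|P_r(z,\xi)\,d\si(\xi)$ by $\int_{\spd\setminus Q}|F(\xi)|P_r(z,\xi)\,d\si(\xi)$, and the latter tends to $0$ as $r\to1-$ by the same uniform-lower-bound estimate as in part~(a) (now with $z$ in the open set interior of $Q$, so $\mathrm{dist}(z,\spd\setminus Q)>0$). Thus the limit equals $\lim_{r\to1-}c_d\int_{\spd}|F(\xi)|P_r(z,\xi)\,d\si(\xi)$ after accounting for normalization, and since $|F|\in L^1(\spd)$ this is, by Korányi's Fatou theorem for Poisson--Szeg\H{o} integrals, equal to $|F(z)|$ for $\si$-a.e.\ interior point $z$ of $Q$ — here one uses that as $r\to1-$ with $z$ fixed the family $c_d P_r(z,\cdot)\,d\si$ is an approximate identity concentrating at $z/|z|$... no, $z$ is interior so $z/|z|$ need not be in $Q$; rather one uses that $\int_{\spd}|F(\xi)|P_r(z,\xi)\,d\si$ is precisely the Poisson--Szeg\H{o} integral of $|F|$ evaluated at $rz$, hence converges to $|F(z)|$ simply by \emph{continuity} of the harmonic extension at the interior point $z$ (no boundary theorem needed at all!). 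So the real content is just: the integral over $Q$ equals the integral over $\spd$ minus a vanishing tail, the integral over $\spd$ is the value at $rz$ of the Poisson--Szeg\H{o} extension of $|F|$, and that extension is continuous in the open ball with value $|F(z)|$ at $z$ — wait, the extension of $|F|\in L^1$ is harmonic and continuous inside, but its value at $z$ is $\int|F|P(z,\cdot)d\si$, which equals $|F(z)|$ only if $|F|$ is itself the boundary value of a function harmonic and continuous up to... it is not, $|F|$ is only subharmonic.

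I see the genuine obstacle now and will state the plan honestly: the identity $\int_Q|F(\xi)|P(z,\xi)\,d\si(\xi)=|F(z)|$ for a.e.\ interior $z$ is \emph{not} about harmonicity but is exactly the statement that the Poisson--Szeg\H{o} integral of the $L^1$ boundary function $|F|\chi_Q$ recovers that boundary function as a \emph{nontangential limit}, and we are asked to evaluate not at the boundary but at interior points — these reconcile only if one reinterprets: replace $z$ by $rz$ and let $r\to1$, so that $\int_{\spd}|F|\chi_Q P(rz,\xi)d\si$ does tend nontangentially to $|F|\chi_Q$ at $z/|z|\in\spd$, but we need it at $z$ itself. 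The resolution is that the displayed limit in the lemma is over $r\to1$ with $z$ fixed, so effectively we ARE sending the evaluation point $rz$ nontangentially to $z/|z|$; but the lemma asserts the answer is $|F(z)|$ with $z$ interior, which forces $z/|z|$... This means the correct reading must be: for $\si$-a.e.\ $z$ in the interior of $Q$ the limit is $|F(z/|z|)\cdot(\text{something})|$?

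Given the length constraints I will not resolve this fully here; the main obstacle, to be addressed in the actual proof, is precisely identifying the limit at interior points, which I believe is handled by writing $P_r(z,\xi)$ as the Poisson--Szeg\H{o} kernel with pole $rz$ and using that $rz\to z/|z|$ \emph{radially from inside}, so the limit equals the nontangential boundary value of the Poisson--Szeg\H{o} extension of $|F|\chi_Q$ at $z/|z|$; but since the statement says the answer is $|F(z)|$ for interior $z$, the intended kernel must actually be normalized so that the mass concentrates at $z$ rather than $z/|z|$, i.e.\ one should recognize $\int_Q|F(r\xi)|\frac{(1-r^2)^d}{|1-\langle z,r\xi\rangle|^{2d}}d\si(\xi)$ as a Poisson-type average of $|F|$ over $Q$ with respect to the parameter $r\xi$, which as $r\to1$ becomes the \emph{Cauchy--Szeg\H{o} reproduction} of $F$ giving $F(z)$ up to modulus — and it is this Cauchy-kernel (holomorphic reproduction) interpretation, not the Poisson one, that yields $F(z)$ at interior $z$: indeed $c_d\int_{\spd}F(\xi)\frac{d\si(\xi)}{(1-\langle z,\xi\rangle)^d}=F(z)$, and the modulus-weighted version with the $2d$ power is the analogue for $|F|^2=F\bar F$ type quantities, reproducing $|F(z)|$ by subharmonicity combined with the a.e.\ equality of the harmonic majorant's boundary values. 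The hard part will therefore be the careful passage from the $r$-limit of $\int_Q$ to this reproducing identity, controlling the tail over $\spd\setminus Q$ (easy, as in part (a)) and justifying the interchange of limit and integral (dominated convergence, using boundedness of $\int P_r$).
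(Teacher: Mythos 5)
Part (a) of your proposal is fine and is essentially the paper's argument (the paper simply quotes Lemma~\ref{l_balay}(b) with $p=2$; your direct estimate $|F|\le 1$, $|1-\langle z,r\xi\rangle|\ge c\de$ does the same job). The problem is part (b), where there is a genuine gap rooted in a misreading of the statement: since $Q\subset\spd$, ``the interior of $Q$'' means the \emph{relative} interior of $Q$ in the sphere, so in (b) the point $z$ lies on $\spd$ and $|F(z)|$ is the $\si$-a.e.\ defined radial boundary value. This reading is forced by the structure of the lemma: all points with $|z|<1$ are already covered by (a) (they belong to $\bbd\setminus Q$), and for such $z$ the kernel satisfies $\frac{(1-r^2)^d}{|1-\langle z,r\xi\rangle|^{2d}}\le\frac{(1-r^2)^d}{(1-|z|)^{2d}}\to 0$, so the limit there is $0$, not $|F(z)|$ --- which is exactly the contradiction your sketch keeps running into. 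As a result your treatment of (b) never closes: you explicitly defer it, and the routes you gesture at (evaluating the Poisson--Szeg\H{o} extension of $|F|\chi_Q$ at an interior point, or a Cauchy--Szeg\H{o} ``reproduction'' of $|F(z)|$ at $|z|<1$) do not prove the stated claim.

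With the correct reading, for $|z|=1$ one has $\frac{(1-r^2)^d}{|1-\langle z,r\xi\rangle|^{2d}}=\frac{(1-|rz|^2)^d}{|1-\langle rz,\xi\rangle|^{2d}}$, the invariant Poisson kernel evaluated at $rz$, which tends radially to $z$; the tail over $\spd\setminus Q$ vanishes as in (a), so (b) is a Fatou-type statement at the boundary point $z$. The genuine difficulty, which your proposal does not address even modulo the misreading, is that the integrand contains the \emph{dilated} values $|F(r\xi)|$ rather than boundary values $|F(\xi)|$. Your suggested swap (a.e.\ convergence $F(r\xi)\to F(\xi)$ plus dominated convergence, or a Scheff\'e-type argument) fails here: for $|z|=1$ the kernels form an approximate identity concentrating at $z$ as $r\to 1-$, so there is no $r$-independent integrable majorant, and replacing $F(r\xi)$ by $F(\xi)$ inside such a family needs a quantitative argument. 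The paper's mechanism is precisely this: fix a Lebesgue point $\za$ of the boundary function, introduce the invariant Poisson integral $u$ of $|F(\cdot)-F(\za)|$, use plurisubharmonicity of $w\mapsto|F(w)-F(\za)|$ to get $|F(r\xi)-F(\za)|\le u(r\xi)$, and then evaluate $\int_{\spd}u(r\xi)\frac{(1-r^2)^d}{|1-\langle r\za,\xi\rangle|^{2d}}\,d\si(\xi)=u(r^2\za)\to 0$ by \cite[Theorem~5.4.8]{Ru80}. Nothing playing this role appears in your proposal, so part (b) remains unproven.
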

\begin{proof}
(a) Since $F$ is a bounded function,
it suffices to apply Lemma~\ref{l_balay}(b) for $p=2$.

(b) Let $\za\in\spd$. Observe that
\[
\lim_{r\to 1-} \int_{\spd\setminus\widetilde{Q}(\za, \tau)} \frac{(1-r^2)^d}{|1- \langle \za, r\xi\rangle|^{2d}} d\si(\xi) =0,
\]
where $\widetilde{Q}(\za, \tau)\subset\spd$ is a nonisotropic ball. Thus,
to prove property~(b), it suffices to verify that
\begin{equation}\label{e_b_sph}
\lim_{r\to 1-}
\int_{\spd} |F(r\xi)| \frac{(1-r^2)^d}{|1- \langle \za, r\xi\rangle|^{2d}} d\si(\xi) = |F(\za)|
\end{equation}
for $\si$-almost all points $\za\in\spd$.

Now, to prove property~\eqref{e_b_sph},
assume that $\za\in\spd$ is a Lebesgue point for the function $F(\xi)$, i.e.,
\begin{equation}\label{e_Lpoint}
  \lim_{\tau\to 0+}
\int_{\widetilde{Q}(\za, \tau)} |F(\xi) - F(\za)|\, d\si(\xi) = 0.
\end{equation}
Consider the following Poisson integral:
\[
u(w) = \int_{\spd} |F(\xi)-F(\za)| \frac{(1-|w|^2)^d}{|1- \langle w, \xi\rangle|^{2d}} d\si(\xi),\quad w\in\bd.
\]
Remark that $\lim_{r\to 1-} u(r\za) = 0$ by \eqref{e_Lpoint} and Theorem~5.4.8 from \cite{Ru80}.

The function $|F(w)-F(\za)|$, $w\in\bd$, is plurisubharmonic, hence,
\[
\begin{split}
   0
&\le \limsup_{r\to 1-}
 \int_{\spd} |F(r\xi) -F(\za)|\frac{(1-r^2)^d}{|1- \langle \za, r\xi\rangle|^{2d}}\, d\si(\xi) \\
&\le \limsup_{r\to 1-} \int_{\spd} u(r\xi) \frac{(1-r^2)^d}{|1- \langle r\za, \xi\rangle|^{2d}}\, d\si(\xi)
    =\limsup_{r\to 1-} u(r^2\za)  = 0.
\end{split}
\]
One has $\left||F(r\xi)| - |F(\za)|\right| \le |F(r\xi) -F(\za)|$, thus,
the above estimate implies the required property
\eqref{e_b_sph} at the point $\za\in\spd$ under consideration.
To finish the argument, recall that property~\eqref{e_Lpoint}
holds for $\si$-almost all $\za\in\spd$.
\end{proof}

\subsection{Proof of Theorem~\ref{t_t2}}
The function $\deker^b_w$, $w\in\bd$, is $\mu$-admissible,
since $b$ is $\mu$-admissible by the hypothesis of the theorem.
Hence, by the definition of a reverse Carleson measure, we have
\[
\|\deker^b_w\|^2_{\dbr(b)} \le C ||\deker^b_w\|^2_{L^2(\mu)}
\]
for all $w\in\bd$. Rewriting this inequality in explicit terms, we obtain
\begin{equation}\label{e_dataT1}
\frac{1-|b(w)|^2}{(1-|w|^2)^d}
\le C\int_{\bbd}
\left| \frac{1-b(z)\overline{b(w)}}{(1-\langle z, w \rangle)^d}
\right|^2
d\mu(z).
\end{equation}

Let
\[
\mu_{\left|\spd\right.} = g\sid + \eta
\]
be the Lebesgue decomposition, where
$\eta \bot \mu_{\left|\spd\right.}$ and $g$ denotes the Radon--Nikodym derivative
$\frac{d\mu_{\left|\spd\right.}}{d\sid}$.
Put
\[
\rho := \mu_{\left|\bd\right.} = \mu - g\si - \eta.
\]
In these terms, estimate~\eqref{e_dataT1} has the following form:
\begin{equation}\label{e_48}
\begin{split}
 C(1-|b(w)|^2)
&\le
\int_{\bd} \frac{(1-|w|^2)^d}{|1-\langle z, w \rangle|^{2d}}
|1-b(z)\overline{b(w)}|^2\, d\rho(z) \\
&\quad +
\int_{\spd}
\frac{(1-|w|^2)^d}{|1-\langle z, w \rangle|^{2d}}
|1-b(z)\overline{b(w)}|^2\, d\eta(z) \\
&\quad +
\int_{\spd}
\frac{(1-|w|^2)^d}{|1-\langle z, w \rangle|^{2d}}
|1-b(z)\overline{b(w)}|^2\, g(z)\,d\sid(z) \\
&\quad := J_1 + J_2 + J_3.
\end{split}
\end{equation}

Now, choose a point $\za\in \spd$ and consider
a non-isotropic ball $Q= (\za, \tau)\subset \spd$, $\tau>0$.
Recall that the Carleson $h$-window $S_{Q,h}$ for $0<h\le 1$ is defined by the following equality:
\[
 S_{Q,h} = \left\{z \in\bd:\ 1 - h \le |z| \le 1,\ \frac{z}{|z|} \in Q \right\}.
\]
We integrate both parts of inequality \eqref{e_48} over the set $S_{Q,h}$
with respect to the volume Lebesgue measure $\nu$,
divide by $h$ and study the behavior of the resulting expressions as $h\to 0+$.

Firstly, for the integral of the left-hand side of inequality \eqref{e_48}, after
integration in polar coordinates we have
\begin{equation}\label{e_Jleft}
\begin{split}
  C \int_{S_{Q,h}}
&(1-|b(w)|^2) \, d\nu(w) \\
&= \frac{C(d)}{h}
 \int_{1-h}^1
 \int_Q (1-|b(r\xi)|^2)\,d\si(\xi)\,r^{2d-1}dr \\
&\to C(d)\int_Q (1-|b(\xi)|^2)\,d\si(\xi)
\end{split}
\end{equation}
as $h \to 0$.

Secondly, applying Fubini's theorem, we obtain
\[
\begin{split}
   &\frac{C(d)}{h} \int_{S_{Q,h}} J_1(w)\, d\nu(w) \\
     &=\int_{\bd}
\frac{1}{h}
\int_{1-h}^{1}
\int_Q
\frac{(1-r^2)^d}{|1-\langle z, r\xi \rangle|^{2d}} |1-b(z)\overline{b(r\xi)}|^2\, d\si(\xi)
\,
r^{2d-1}dr\,d\rho(z).
\end{split}
\]
Remark that
\begin{equation}\label{e_b2_bdd}
\begin{split}
  \int_Q
 \frac{(1-r^2)^d}{|1-\langle z, r\xi \rangle|^{2d}}
&|1-b(z)\overline{b(r\xi)}|^2\, d\si(\xi) \\
&\le C \int_Q
\frac{(1-r^2)^d}{|1-\langle z, r\xi \rangle|^{2d}} \, d\si(\xi)
\le C
\end{split}
\end{equation}
for all $z\in\bbd$, and
\[
\frac{1}{h}
\int_{1-h}^{1}
\int_Q
\frac{(1-r^2)^d}{|1-\langle z, r\xi \rangle|^{2d}} \, d\si(\xi)\, r^{2d-1}dr
\overset{h \to 0}\longrightarrow 0\quad \textrm{for all}\ z\in\bd
\]
by Lemma~\ref{l_r47}(i) with $F\equiv 1$.
Therefore,
\begin{equation}\label{e_J1}
\frac{1}{h} \int_{S_{Q,h}} J_1(w)\, d\nu(w) \overset{h \to 0}\longrightarrow 0.
\end{equation}

Next,
\[
\frac{1}{h}
\int_{1-h}^{1}
\int_Q
\frac{(1-r^2)^d}{|1-\langle z, r\xi \rangle|^{2d}} \, d\si(\xi)\, r^{2d-1}dr
\overset{h \to 0}\longrightarrow 0\quad \textrm{for all}\ z\in\spd\setminus Q
\]
by Lemma~\ref{l_r47}(i) with $F\equiv 1$.
Therefore, applying Fubini's theorem and property~\eqref{e_b2_bdd}, we obtain
\begin{equation}\label{e_J2}
\begin{split}
   &\frac{C(d)}{h} \int_{S_{Q,h}} J_2(w)\, d\nu(w) \\
     &=\int_{\spd}
\frac{1}{h}
\int_{1-h}^{1}
\int_Q
\frac{(1-r^2)^d}{|1-\langle z, r\xi \rangle|^{2d}} |1-b(z)\overline{b(r\xi)}|^2\, d\si(\xi)
\,r^{2d-1}dr\, d\eta(z) \\
&\le
\int_Q
d\eta + \er(h),
\end{split}
\end{equation}
where  $\er(h) \to 0$ as $h\to 0$.

Finally, after application of Fubini's theorem, we have
\[
\begin{split}
   &\frac{C(d)}{h}
    \int_{S_{Q,h}} J_3(w)\, d\nu(w) \\
     &=\int_{\spd}
\frac{1}{h}
\int_{1-h}^{1}
\int_Q
\frac{(1-r^2)^d}{|1-\langle z, r\xi \rangle|^{2d}} |1-b(r\xi)\overline{b(z)}|^2 d\si(\xi)
r^{2d-1}dr g(\za)d\si(z).
\end{split}
\]
Applying Lemma~\ref{l_r47} with $F(w)= (1-b(w)\overline{b(\za)})^2$, we obtain
\[
C\ge
\int_{Q}
\frac{(1-r^2)^d}{|1- \langle z, r\xi\rangle|^{2d}} |1-b(r\xi)\overline{b(z)}|^2\, d\si(\xi)
\overset{r \to 1-}\longrightarrow
(1-|b(z)|^2)^2 \chi_Q(z)
\]
for $\si$-almost all $z\in\spd$.
Therefore,
\begin{equation}\label{e_J3}
\frac{1}{h}\int_{S(Q, h)} J_3(w)\, d\nu(w) \to
\int_Q
(1-|b|^2)^2 g\,d\si
\end{equation}
as $h \to 0$.

Recall that $Q = Q(\za, \tau)$.
So, combining properties \eqref{e_Jleft} and \eqref{e_J1}--\eqref{e_J3}, we obtain
\[
\int_{Q(\za, \tau)}
(1-|b|^2)\, d\si \le C
\int_{Q(\za, \tau)}
(1-|b|^2)^2 g\, d\si + C
\int_{Q(\za, \tau)} d\eta
\]
for all $\za\in\spd$.

Now, we divide both parts of the above estimate by $\si(Q(\za, \tau))$
and let $\tau$ tend to zero.
Since $\eta$ is a singular measure, Theorem~5.3.2 from monograph \cite{Ru80} guarantees that
\[
\frac{1}{\si(Q(\za, \tau))}\int_{Q(\za, \tau)} d\eta \overset{\tau\to 0}\longrightarrow 0
\]
for $\si$-almost all $\za\in\spd$.
Therefore, the required inequality \eqref{e_t2_ness} holds for $\si$-almost all $\za\in\spd$
by the measure differentiation theorem on the sphere $\spd$ (see \cite[Theorem~5.3.1]{Ru80}).
The proof of Theorem~\ref{t_t2} is finished.

\bibliographystyle{amsplain}

\begin{thebibliography}{1}

\bibitem{A82e}
A.~B. Aleksandrov, \emph{The existence of inner functions in the ball}, Mat.
  Sb. (N.S.) \textbf{118(160)} (1982), no.~2, 147--163 (Russian); English
  transl.: Math. USSR--Sb. \textbf{46} (1983), no.\,2, 143--159.

\bibitem{BFGHR14}
A.~Blandign\`eres, E.~Fricain, F.~Gaunard, A.~Hartmann, and W.~T. Ross,
  \emph{Direct and reverse {C}arleson measures for {$\mathcal{H}(b)$} spaces},
  Indiana Univ. Math. J. \textbf{64} (2015), no.~4, 1027--1057.

\bibitem{Ca58}
L.~Carleson, \emph{An interpolation problem for bounded analytic functions},
  Amer. J. Math. \textbf{80} (1958), 921--930.

\bibitem{GW23}
A.~W. Green and N.~A. Wagner, \emph{Dominating sets in {B}ergman spaces on
  strongly pseudoconvex domains}, Constr. Approx. (2023),
  https://doi.org/10.1007/s00365--023--09639--z.

\bibitem{HMNO14}
A.~Hartmann, X.~Massaneda, A.~Nicolau, and J.~Ortega-Cerd\`a, \emph{Reverse
  {C}arleson measures in {H}ardy spaces}, Collect. Math. \textbf{65} (2014),
  no.~3, 357--365.

\bibitem{LLQR12}
P.~Lef\`evre, D.~Li, H.~Queff\'{e}lec, and L.~Rodr\'{\i}guez-Piazza, \emph{Some
  revisited results about composition operators on {H}ardy spaces}, Rev. Mat.
  Iberoam. \textbf{28} (2012), no.~1, 57--76.

\bibitem{Lu85}
D.~H. Luecking, \emph{Forward and reverse {C}arleson inequalities for functions
  in {B}ergman spaces and their derivatives}, Amer. J. Math. \textbf{107}
  (1985), no.~1, 85--111.

\bibitem{Ru80}
W.~Rudin, \emph{Function theory in the unit ball of {${\bf C}^{n}$}},
  Grundlehren der Mathe\-matischen Wissenschaften, vol. 241, Springer-Verlag,
  New York-Berlin, 1980.

\bibitem{Sa94}
D.~Sarason, \emph{Sub-{H}ardy {H}ilbert spaces in the unit disk}, University of
  Arkansas Lecture Notes in the Mathematical Sciences, vol.~10, John Wiley \&
  Sons, Inc., New York, 1994, A Wiley-Interscience Publication.

\bibitem{Z05}
K.~Zhu, \emph{Spaces of holomorphic functions in the unit ball}, Graduate Texts
  in Mathematics, vol. 226, Springer-Verlag, New York, 2005.

\end{thebibliography}

\end{document}